\documentclass{article}

\usepackage{arxiv}

\usepackage[utf8]{inputenc} 
\usepackage[T1]{fontenc}    
\usepackage{url}            
\usepackage{booktabs}       
\usepackage{nicefrac}       
\usepackage{microtype}      
\usepackage{lipsum}
\usepackage{amsmath, amsthm, amscd, amsfonts, amssymb, graphicx, color}
\usepackage[backref,colorlinks=true]{hyperref}

\title{Classification of almost contact metric structures on three-dimensional non-unimodular Lie groups}

\author{
  Pejhman Vatandoost-Miandehi $^1$\\
  Department of Mathematics and Computer Science\\
  Amirkabir University of Technology\\
  Tehran, Iran \\
   \And
 Asadollah Razavi $^2$\\
  Department of Mathematics and Computer Science\\
  Amirkabir University of Technology\\
  Tehran, Iran \\
}

\newcommand{\g}{\mathfrak{g}}
\newcommand{\m}{\mathfrak{m}}
\newcommand{\fru}{\mathfrak{u}}
\newcommand{\cL}{\mathcal{L}}
\newcommand{\rspan}{\mathrm{span}}
\DeclareMathOperator{\ad}{\mathrm{ad}}

\DeclareMathOperator{\trace}{\mathrm{trace}}
\begin{document}
\maketitle
\newtheorem{theorem}{Theorem}[section]
\newtheorem{lemma}[theorem]{Lemma}
\newtheorem{proposition}[theorem]{Proposition}
\newtheorem{corollary}[theorem]{Corollary}
\newtheorem{question}[theorem]{Question}

\theoremstyle{definition}
\newtheorem{definition}[theorem]{Definition}
\newtheorem{algorithm}[theorem]{Algorithm}
\newtheorem{conclusion}[theorem]{Conclusion}
\newtheorem{problem}[theorem]{Problem}
\newtheorem{example}[theorem]{Example}

\theoremstyle{remark}
\newtheorem{remark}[theorem]{Remark}
\numberwithin{equation}{section}
\footnotetext[1]{E-mail:~\texttt{(pejhman.vatandoost@gmail.com)}; \texttt{(pejhman.vatandoost@iran.ir)}}
\footnotetext[2]{E-mail:~\texttt{(arazavi@iran.ir)}}
\begin{abstract}
In this paper, left-invariant almost contact metric structures on three-dimensional non-unimodular Lie groups are investigated. It is proved that for every Riemannian Lie group, there is one of these structures. In addition, left-invariant normal almost contact metric structures on three dimensional non-unimodular Lie groups are classified.
\end{abstract}

\keywords{Almost Contact Metric Structures, Homogenous Manifolds, Three-Dimensional non-Unimodular Riemannian Lie Groups}

\section{Introduction}
A manifold $M^{2n+1}$ has an almost contact structure  $(\varphi, \xi, \eta)$ if a $(1, 1)$-type tensor like $\varphi$ and a nowhere zero universal vector field $\xi$ and a form $\eta$ are provided on $M$ such that following conditions are satisfied:
\begin{equation}\label{e1}
\aligned
&\varphi(\xi)=1, \quad
\eta\circ \varphi=0, \quad
\eta(\xi)=1,\\
&\varphi^2=-Id + \eta \otimes \xi.
\endaligned	
\end{equation}
Now, if the manifold $M^{2n+1}$ with the almost contact structure $(\varphi, \xi, \eta)$  takes a Riemannian metric $g$, and the conditions 
\begin{equation}\label{e2}
\forall X, Y \in \chi(M), \quad
g(\varphi X, \varphi Y)= g(X, Y)- \eta(X) \eta(Y),
\end{equation}
are hold, then it can be said that $M^{2n+1}$ has an almost contact metric structure and $g$ is called a compatible metric or an almost contact metric.

Every almost contact structure takes a compatible metric. According to \eqref{e1} and \eqref{e2}, for every compatible metric, $\eta(X) = g(\xi, X)$  and also  $\ker \eta = \xi^\perp$, then  $g(\varphi X, Y) = g(X, \varphi Y)$, therefore $J=\varphi|_{ker\,\eta}$ is an compatible almost complex structure with restriction $g$ to $ker\,\eta$.

If $g$ is a compatible Riemannian metric with the almost contact structure $(\varphi, \xi, \eta)$  on the manifold $M$, $2$-form $\varphi$ that is defined as
\begin{equation}\label{e3}
\phi(X, Y) = g(X, \varphi Y),
\end{equation}
is called fundamental $2$-form. 

If for the almost contact structure $(\varphi, \xi, \eta)$  over the manifold $M$, there is a compatible Riemannian metric $g$ such that for every  $X, Y \in \chi(M)$, we have
\begin{equation}\label{e4}
d\eta (X, Y)=\phi (X, Y),
\end{equation}
then, $M$ is called contact Riemannian manifold and is represented with  $(M, \varphi, \xi, \eta)$. The almost contact metric manifold $(M, \varphi, \xi, \eta, g)$ is said to be homogeneous if a connected Lie subgroup $G$ from the group $M$ has a transitive relation with $M$ and $1$-form $\eta$ is invariant to $G$.

As stated in \cite{1}, to classify almost contact homogeneous 3D metric manifolds, it is sufficient to examine the left-invariant almost contact metric structures on 3D groups.

Suppose $\g$ is the next arbitrary $(2n + 1)$ Lie algebra. An almost contact metric structure on $\g$ is a quaternion  $(\varphi, \xi, \eta, g)$ in which $\eta$ is a $1$-form, $\varphi \in End(\g)$, and $g$ is a finite positive interior  on $\g$ such that
\begin{equation}\label{e5}
\aligned
\eta(\xi) =1, \quad \varphi^2=-Id + \eta \otimes \xi,\\
\forall X, Y \in \g, \quad g(\varphi X, \varphi Y) = g(X, Y) - \eta(X) \eta(Y).
\endaligned
\end{equation}
The two almost contact Lie algebras $(\g_1, \varphi_1, \xi_1, \eta_1, g_1)$  and  $(\g_2, \varphi_2, \xi_2, \eta_2, g_2)$ are isomorph if there is a linear mapping  $f: \g_1 \to \g_2$ called (contact) isomorphism, such that 
\begin{equation*}
f: (\g_1, g_1) \to (\g_2, g_2),
\end{equation*}
\begin{equation*}
f(\xi_1)=\xi_2,
\end{equation*}
\begin{equation*}
f \circ \varphi_1 = \varphi_2 \circ f,
\end{equation*}
\begin{equation*}
\eta_1 = \eta_2 \circ f,
\end{equation*}
Reference \cite{2} is suitable for the study of contact metric geometry. The contact geometry has been studied by many people. The references \cite{3,4,5,6,7,8} contain many examples of important results about contact geometry. In reference \cite{9}, Riemannian geometries on Lie groups equipped with left-invariant metrics are investigated.

In this paper, we will investigate the almost contact metric structures $(\varphi, \xi, \eta)$  over non-unimodular Lie groups that satisfy the condition  $\xi \in ker\,d\eta$.

\section{Almost contact metric structures on three dimensional non-unimodular Lie groups}
\begin{lemma}\label{lem 1}
	For every almost contact structure  $(\varphi, \xi, \eta)$, we have
	\begin{equation*}
	\xi \in \ker d\eta \Leftrightarrow \cL_\xi \eta=0,	
	\end{equation*}
	where $\cL$  is a Lie derivative. 
\end{lemma}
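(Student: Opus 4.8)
The plan is to reduce everything to Cartan's magic formula $\cL_\xi \eta = d(\eta(\xi)) + d\eta(\xi,\cdot)$, where I write $d\eta(\xi,\cdot)$ for the interior product of $\xi$ with the $2$-form $d\eta$, i.e.\ the $1$-form $X \mapsto d\eta(\xi, X)$. With this notation, the condition $\xi \in \ker d\eta$ is by definition the statement $d\eta(\xi,\cdot) = 0$, so the claimed equivalence will follow as soon as the exact term $d(\eta(\xi))$ is shown to vanish.

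That is where the defining relations \eqref{e1} of an almost contact structure enter: since $\eta(\xi) = 1$ identically on $M$, the function $\eta(\xi)$ is constant, hence $d(\eta(\xi)) = 0$. Plugging this into Cartan's formula gives $\cL_\xi \eta = d\eta(\xi,\cdot)$ as an identity of $1$-forms, and the two conditions $\cL_\xi\eta = 0$ and $\xi\in\ker d\eta$ are therefore literally the same condition, proving both implications at once.

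The only point requiring care is bookkeeping of conventions: one should fix the normalization of $d\eta$ (the factor in $d\eta(X,Y) = X\eta(Y) - Y\eta(X) - \eta([X,Y])$ versus a $\tfrac12$-convention) and the sign convention in Cartan's formula so that they are mutually consistent; the argument is insensitive to these choices, since a nonzero scalar multiple of $d\eta(\xi,\cdot)$ vanishes exactly when $d\eta(\xi,\cdot)$ does. I do not anticipate any genuine obstacle here --- the content of the lemma is precisely the observation that $\eta(\xi)$ is constant --- but stating it cleanly is useful for the later sections, where one repeatedly passes between $\xi\in\ker d\eta$ and $\cL_\xi\eta = 0$ while analyzing left-invariant structures on the non-unimodular Lie algebras.
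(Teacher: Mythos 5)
Your proof is correct and is essentially the paper's argument in different packaging: the paper computes $d\eta(\xi,X)=\xi(\eta(X))-\eta([\xi,X])=(\cL_\xi\eta)(X)$ directly from the intrinsic formula for $d\eta$ (the term $-X(\eta(\xi))$ dropping out because $\eta(\xi)=1$ is constant), which is exactly the content of Cartan's formula once $d(\eta(\xi))=0$ is noted. Your remark about normalization conventions is a fair point of care, but both routes hinge on the same single observation.
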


\begin{proof}
	According to \eqref{e1} we have
	\begin{equation*}
	\eta(\xi)=1.
	\end{equation*}
	Also for every vector field
	\begin{equation*}
	d\eta (\xi, X) = \xi (\eta(X)) - \eta([\xi, X]) 
	=
	(\cL_\xi \eta)(X).
	\end{equation*}
\end{proof}

\begin{proposition}\label{prop 2}
	Geodesic $\Gamma(t)$ of the manifold $M=K/H$ with the condition  $\Gamma(0)=0$ and
	$$\Gamma'(0) = X_\m \in \m,~~~(\m=T_0\left(\dfrac{K}{H}\right)),$$
	is homogeneous if and only if an $X_\eta \in \eta$  exists so that $X=X_m + X_\eta \in l$  in $g([X,Y]_\m, X_\m)=0$  is hold for all  $Y \in m$.
\end{proposition}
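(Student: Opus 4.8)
The plan is to recognise this as (a version of) the classical \emph{geodesic lemma} for homogeneous spaces and to prove it by pairing the Killing-field characterisation of geodesic orbits with a one-line first-variation computation. Throughout I write $\mathfrak{k}=\mathrm{Lie}(K)$ (the algebra denoted $l$ in the statement) and $\mathfrak{h}=\mathrm{Lie}(H)$ (the subalgebra denoted $\eta$), fix the reductive decomposition $\mathfrak{k}=\mathfrak{h}\oplus\m$ with the usual identification $\m\cong T_{o}M$ at the base point $o$ (the point denoted $0$ in the statement), and let $g$ denote also the associated $\mathrm{Ad}(H)$-invariant inner product on $\m$. By definition $\Gamma$ is a homogeneous geodesic precisely when it coincides with an orbit $t\mapsto\exp(tX)\cdot o$ of a one-parameter subgroup of $K$. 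Writing $X=X_{\m}+X_{\mathfrak{h}}$ with $X_{\m}\in\m$ and $X_{\mathfrak{h}}\in\mathfrak{h}$, this orbit has initial velocity $X^{*}_{o}\leftrightarrow X_{\m}$ at $t=0$, since $H$ fixes $o$; and because a geodesic is determined by its initial velocity, requiring $\Gamma$ to be homogeneous is the same as requiring that, for some $X_{\mathfrak{h}}\in\mathfrak{h}$, the curve $t\mapsto\exp(tX)\cdot o$ with $X=X_{\m}+X_{\mathfrak{h}}$ be a geodesic. So everything reduces to characterising when $t\mapsto\exp(tX)\cdot o$ is a geodesic, and the goal is to show this happens exactly when $g([X,Y]_{\m},X_{\m})=0$ for all $Y\in\m$.

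First I would bring in the fundamental (Killing) vector field $X^{*}$ on $M$ induced by $X\in\mathfrak{k}$, and note that $\Gamma(t)=\exp(tX)\cdot o$ is exactly its integral curve through $o$, so $\Gamma'(t)=X^{*}_{\Gamma(t)}$. The engine of the proof is the elementary identity, valid for every Killing field $V$,
\[
\nabla_{V}V=-\tfrac12\,\mathrm{grad}\,\lVert V\rVert^{2},
\]
which I would derive from the Killing equation $g(\nabla_{W}V,V')+g(\nabla_{V'}V,W)=0$ by taking $V'=V$ and pairing against an arbitrary test field $W$. Hence $\nabla_{\Gamma'}\Gamma'=(\nabla_{X^{*}}X^{*})|_{\Gamma}=-\tfrac12\,\mathrm{grad}\,\lVert X^{*}\rVert^{2}|_{\Gamma}$, and $\Gamma$ is a geodesic if and only if $\mathrm{grad}\,\lVert X^{*}\rVert^{2}$ vanishes along $\Gamma$, i.e.\ $Z(\lVert X^{*}\rVert^{2})=0$ for every tangent vector $Z$ at every point of $\Gamma$.

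Next I would push this condition to the single point $o$ and evaluate it. Since $\mathrm{Ad}(\exp(tX))X=X$, the field $X^{*}$ is invariant under its own flow $\tau_{\exp(tX)}$, which acts by isometries; so $\lVert X^{*}\rVert^{2}$, and hence $\mathrm{grad}\,\lVert X^{*}\rVert^{2}$, are $\tau_{\exp(tX)}$-invariant, and the latter vanishes along $\Gamma$ if and only if it vanishes at $o$. Moreover $T_{o}M=\{\,Y^{*}_{o}:Y\in\mathfrak{k}\,\}=\{\,Y_{\m}:Y\in\m\,\}$, so it is enough to test the directions $Y_{\m}$, $Y\in\m$. For such a $Y$, using the isometry $\tau_{\exp(-sY)}$ to transport $X^{*}$ back to $o$ together with $(\tau_{a})_{*}X^{*}=(\mathrm{Ad}(a)X)^{*}$, the value of $\lVert X^{*}\rVert^{2}$ at $\exp(sY)\cdot o$ equals $g\bigl((\mathrm{Ad}(\exp(-sY))X)_{\m},(\mathrm{Ad}(\exp(-sY))X)_{\m}\bigr)$; differentiating at $s=0$, with $\frac{d}{ds}\big|_{0}\mathrm{Ad}(\exp(-sY))X=-[Y,X]=[X,Y]$, gives $Y_{\m}(\lVert X^{*}\rVert^{2})=2\,g([X,Y]_{\m},X_{\m})$. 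Thus $\Gamma$ is a geodesic if and only if $g([X,Y]_{\m},X_{\m})=0$ for all $Y\in\m$, which combined with the reduction of the first paragraph proves both implications.

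I expect the only real obstacle here to be bookkeeping rather than ideas: keeping the identification $\m\cong T_{o}M$ consistent, pinning down conventions so that the signs in $(\tau_{a})_{*}X^{*}=(\mathrm{Ad}(a)X)^{*}$ and in $\frac{d}{ds}\mathrm{Ad}(\exp(-sY))X=[X,Y]$ come out correctly, and stating and proving the Killing identity $\nabla_{V}V=-\tfrac12\,\mathrm{grad}\,\lVert V\rVert^{2}$ with care. Beyond that there is no conceptual difficulty; the argument above is the standard first-variation proof of the geodesic lemma specialised to the reductive homogeneous space $M=K/H$.
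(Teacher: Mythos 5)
Your argument is correct: it is the standard proof of the Kowalski--Vanhecke ``geodesic lemma,'' and all the key computations check out, in particular $(\tau_a)_*X^*=(\mathrm{Ad}(a)X)^*$, the Killing identity $\nabla_VV=-\tfrac12\,\mathrm{grad}\,\lVert V\rVert^2$, and the evaluation $Y_\m(\lVert X^*\rVert^2)|_o=2\,g([X,Y]_\m,X_\m)$. There is nothing in the paper to compare it with: the paper states Proposition~\ref{prop 2} with no proof at all (it is quoted as a known result from the homogeneous-geodesics literature), so your write-up actually supplies an argument the paper omits. Two small points worth making explicit if you polish this: (i) you tacitly use a reductive complement $\m$ with an $\mathrm{Ad}(H)$-invariant inner product --- this exists for Riemannian homogeneous spaces because the isotropy group is compact, but it should be flagged since the paper's (garbled) statement only says $\m=T_0(K/H)$; (ii) if ``homogeneous geodesic'' is taken to mean that the orbit is a geodesic only up to reparametrisation, you should add the one-line observation that in the Riemannian case $g(\nabla_{X^*}X^*,X^*)=\tfrac12 X^*(\lVert X^*\rVert^2)=0$ forces the proportionality factor in $\nabla_{X^*}X^*=\lambda X^*$ to vanish, so the parametrisation by $t$ is automatically affine and your criterion is unaffected. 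Neither point changes the substance; the proof stands.
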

The vector $X \in l$  that is satisfied above condition is called geodesic vector.

\begin{proposition}\label{prop 3}
	Suppose $(M=K/H, \varphi, \xi, \eta, g)$  is a homogeneous almost contact metric manifold, then   $\xi \in \ker d\eta$ if and only if $\xi$ is a geodesic vector.
\end{proposition}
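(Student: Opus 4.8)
The plan is to translate the condition $\xi \in \ker d\eta$ into a purely metric-geometric statement about $\xi$ via Lemma~\ref{lem 1}, and then recognize that statement as the ``geodesic vector'' criterion of Proposition~\ref{prop 2} specialized to $Y$ ranging over the whole algebra. First I would invoke Lemma~\ref{lem 1} to replace $\xi \in \ker d\eta$ by $\cL_\xi \eta = 0$. Next, since $g$ is a compatible metric we have $\eta(\cdot) = g(\xi,\cdot)$, so for all $X$,
\begin{equation*}
(\cL_\xi \eta)(X) = \xi(g(\xi,X)) - g(\xi,[\xi,X]) = g(\nabla_\xi \xi, X) + g(\xi, \nabla_\xi X) - g(\xi,[\xi,X]),
\end{equation*}
where $\nabla$ is the Levi-Civita connection; using torsion-freeness $\nabla_\xi X - \nabla_X \xi = [\xi,X]$ and metric compatibility, the last two terms combine to $g(\xi,\nabla_X\xi) = \tfrac12 X(g(\xi,\xi)) = \tfrac12 X(\eta(\xi)) = \tfrac12 X(1) = 0$. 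Hence $(\cL_\xi\eta)(X) = g(\nabla_\xi\xi, X)$ for every $X$, and therefore $\cL_\xi\eta = 0$ if and only if $\nabla_\xi\xi = 0$, i.e. the integral curve of $\xi$ through the origin is a geodesic.

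Then I would connect $\nabla_\xi \xi = 0$ to the geodesic-vector condition of Proposition~\ref{prop 2}. In the homogeneous setting $M = K/H$ with reductive decomposition, a vector $X_\m \in \m$ integrates to a homogeneous geodesic precisely when the Koszul-type identity $g([X,Y]_\m, X_\m) = 0$ holds for all $Y \in \m$ (for a suitable lift $X = X_\m + X_\eta$); this is exactly the content of Proposition~\ref{prop 2}. So to finish I would take $X_\m = \xi$ and observe that $\nabla_\xi \xi = 0$ is equivalent to this bracket condition, invoking Proposition~\ref{prop 2} in both directions: if $\xi$ is a geodesic vector, its integral curve is a homogeneous geodesic, so $\nabla_\xi\xi = 0$, so $\cL_\xi\eta = 0$, so $\xi \in \ker d\eta$; conversely $\xi \in \ker d\eta$ gives $\nabla_\xi\xi = 0$, so the integral curve of $\xi$ is a geodesic through the origin with initial velocity $\xi \in \m$, hence by Proposition~\ref{prop 2} $\xi$ is a geodesic vector.

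The main obstacle I anticipate is the bookkeeping at the interface between the two formulations: Proposition~\ref{prop 2} allows a correction term $X_\eta$ in the isotropy direction, whereas the naive computation gives $\nabla_\xi\xi = 0$ for the horizontal lift; I need to be careful that the ``geodesic vector'' language in Proposition~\ref{prop 2}, which a priori concerns $g([X,Y]_\m, X_\m) = 0$ for $Y \in \m$ only, genuinely matches $\nabla_{\xi}\xi = 0$ and does not secretly require testing against vertical directions as well. A clean way around this is to bypass Proposition~\ref{prop 2}'s $K/H$ formalism entirely for the Lie group case (where $H$ is trivial, $\m = \g$, and $X_\eta = 0$), reducing the claim to the elementary fact that for a left-invariant unit vector field $\xi$ on a Lie group with left-invariant metric, $g(\nabla_\xi\xi, Y) = g(\xi, [\xi, Y])$ for all $Y$, so $\nabla_\xi\xi = 0 \iff g([\xi,Y],\xi) = 0$ for all $Y \iff \xi$ is a geodesic vector; then $\xi \in \ker d\eta \iff \cL_\xi\eta = 0 \iff \nabla_\xi\xi = 0$ closes the equivalence. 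The only real care needed is verifying that the sign conventions in $\cL_\xi\eta$ and in $d\eta$ match those fixed in Lemma~\ref{lem 1}.
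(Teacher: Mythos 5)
The paper states Proposition~\ref{prop 3} without any proof (it is quoted as a known fact, essentially from Calvaruso's work cited as \cite{1}), so there is no argument of the authors' to compare yours against; judged on its own, your argument is correct and is the standard one. The chain $\xi \in \ker d\eta \Leftrightarrow \cL_\xi\eta = 0 \Leftrightarrow \nabla_\xi\xi = 0 \Leftrightarrow g([\xi,Y],\xi)=0$ for all $Y$ is exactly right: the computation $(\cL_\xi\eta)(X) = g(\nabla_\xi\xi,X)$ using $\eta = g(\xi,\cdot)$, torsion-freeness and $g(\xi,\xi)\equiv 1$ is clean, and in the left-invariant setting the Koszul formula gives $g(\nabla_\xi\xi,Y) = -g([\xi,Y],\xi)$ (note the sign is opposite to what you wrote, but this is immaterial for the vanishing statement). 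You correctly identify the one genuine subtlety: Proposition~\ref{prop 2} characterizes \emph{homogeneous} geodesics (orbits of one-parameter subgroups) via the algebraic condition $g([X,Y]_\m,X_\m)=0$, which is not literally the same as $\nabla_\xi\xi=0$ for the invariant vector field $\xi$ in a general reductive space $K/H$ with nontrivial isotropy and a correction term $X_\eta$ in the isotropy algebra. Your fallback — specializing to the Lie group case where $H$ is trivial, $\m=\g$, and the two notions visibly coincide — is legitimate and covers everything the paper actually uses, since the authors only ever apply Proposition~\ref{prop 3} to left-invariant structures on Lie groups; an even shorter route there is to note that for left-invariant $X$ the function $\eta(X)$ is constant, so $(\cL_\xi\eta)(X) = -\eta([\xi,X]) = -g(\xi,[\xi,X])$ directly, bypassing the connection altogether.
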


\begin{lemma}\label{lem 4}
	Suppose $(\varphi, \xi, \eta, g)$  is an almost contact metric structure with condition of  $\xi \in \ker d\eta$, then for every $X \in \ker \eta$  we have
	$$[\xi, X]\in \ker \eta.$$
\end{lemma}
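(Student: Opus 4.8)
The plan is to deduce this directly from Lemma~\ref{lem 1} together with the Cartan-type identity already established in its proof. Since we are given $\xi\in\ker d\eta$, Lemma~\ref{lem 1} tells us that $\cL_\xi\eta=0$, so that $(\cL_\xi\eta)(X)=0$ for every vector field $X$. On the other hand, the computation carried out in the proof of Lemma~\ref{lem 1} gives the pointwise formula
\[
(\cL_\xi\eta)(X)=\xi\bigl(\eta(X)\bigr)-\eta\bigl([\xi,X]\bigr),
\]
valid for all $X$. Combining the two, I get $\eta([\xi,X])=\xi(\eta(X))$ for every $X$.

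Next I would specialize to $X\in\ker\eta$. In the present (left-invariant, Lie-algebra) setting the functions $\eta(X)$ are constants, so $\eta(X)=0$ identically whenever $X$ lies in the subspace $\ker\eta$, and hence the term $\xi(\eta(X))=\xi(0)=0$. Plugging this into the displayed identity yields $\eta([\xi,X])=0$, which is exactly the assertion $[\xi,X]\in\ker\eta$.

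I do not anticipate a serious obstacle here: the only point that needs a word of care is the vanishing of $\xi(\eta(X))$, i.e. the remark that on a Lie group with left-invariant structure the value $\eta(X)$ of a left-invariant $1$-form on a left-invariant field is a constant (so its derivative along $\xi$ is zero). Once this observation is in place, the lemma follows in one line from Lemma~\ref{lem 1}. (Alternatively, one could phrase the same argument intrinsically as: $\ker\eta$ is preserved by the flow of $\xi$ because $\cL_\xi\eta=0$, and infinitesimally this says precisely $[\xi,\ker\eta]\subseteq\ker\eta$.)
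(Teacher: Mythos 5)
Your argument is correct and is exactly the route the paper intends: the paper dismisses this lemma with ``It is obvious,'' and the intended justification is precisely the one you give, namely applying the identity $(\cL_\xi\eta)(X)=\xi(\eta(X))-\eta([\xi,X])$ from the proof of Lemma~\ref{lem 1} together with the constancy of $\eta(X)$ for left-invariant $X$. Nothing is missing.
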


\begin{proof}
	It is obvious.
\end{proof}

The three-dimensional Riemannian Lie groups are classified in \cite{9}. Suppose $G$ is a Lie group equipped with a left-invariant Riemannian metric $g$, as stated in \cite{9}, $G$ is unimodular if and only if the linear mapping
$$
L(x \times y)=[x, y], ~~~ x, y \in \g,
$$
is self-adjoint. So if $(G, g)$ is a three-dimensional unimodular Riemannian Lie group, then its Lie algebra $\g$ takes a basis $\{e_1, e_2, e_3\}$  such that
\begin{align*}
&  [e_1, e_2]= \lambda_3 e_3, \\
& [e_2, e_3]= \lambda_1 e_1, \\
& [e_3, e_1]= \lambda_2 e_2.
\end{align*}

Suppose $G$ is a non-unimodular three-dimensional connected Lie group. Then its Lie algebra has a basis  $\{e_1, e_2, e_3\}$ such that 	
\begin{equation}\label{e6}
[e_1, e_2]= \alpha e_2 + \beta e_3, \quad
[e_1, e_3]= \gamma e_2 + \delta e_3, \quad
[e_2, e_3]=0.
\end{equation}
Also matrix
\begin{equation*}
A= \left(\begin{matrix}
\alpha & \beta \\
\gamma & \delta
\end{matrix}\right),
\end{equation*}
satisfies conditions $\alpha \gamma + \beta \delta=0$  and  $\alpha + \delta=0$.

If $\g$ is a non-unimodular three-dimensional Lie algebra, its unimodular kernel i.e.,  
$$\fru = \{ x\in \g | \trace \ad (x)=0 \},$$ 
is $2$-dimensional and unimodular and commutative. Consider  
$$
e_1 \in \g, ~~~ \trace \ad (e_1)=2,
$$
since $\fru$ is commutative, a linear transformation
$$
L(\fru)=[e_1, \fru],
$$
from $\fru$ to itself with $\trace = 2$ is independent of  $e_1$ choice. Determinant $D= \dfrac{4(\alpha \delta- \beta \gamma)}{(\alpha+\delta)^2}$  of $L$ is a complete invariant isomorphism for this Lie algebra. By choosing  $e_2$, the vectors of  $L(e_2) =e_3$, $e_2$ are linearly independent and the conditions of $\trace(L)=2$ and $del(L)=D$ are satisfied by
\begin{equation*}
L(e_2) =e_3,~~~
L(e_3)=-D e_2 + 2 e_3.
\end{equation*}
Therefore, the bracket product operator is uniquely defined.

\begin{proposition}[Special case]\label{ec5}
	Suppose $\g$ is a Lie algebra with the property that the bracket product $[x, y]$ for every  $[x, y] \in \g$ is always equal to the linear combination of $x$ and $y$. Suppose  $dim~g \geq 2$, then 
	\begin{equation}\label{e7}
	[x,y]=l(x)y - l(y)x,
	\end{equation}
	where $l$ is a well-defined linear mapping from $\g$ to $\mathbb{R}$.
\end{proposition}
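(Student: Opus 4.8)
The plan is to show first that the bracket must vanish on any linearly dependent pair, and then use this together with a skew-symmetric bilinearity argument to extract the linear functional $l$. Concretely, for any $x \in \g$ the hypothesis says $[x,x]$ is a multiple of $x$; but $[x,x]=0$ by skew-symmetry, so nothing is gained directly there. Instead I would start from the hypothesis applied to an arbitrary pair: for each $x,y$ there are scalars $a(x,y),b(x,y)$ (depending on $x,y$) with $[x,y]=a(x,y)\,x + b(x,y)\,y$. When $x,y$ are linearly independent these scalars are uniquely determined; skew-symmetry $[x,y]=-[y,x]$ then forces $a(x,y) = -b(y,x)$ and $b(x,y) = -a(y,x)$. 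So writing $l(x,y):=b(x,y)$, we have $[x,y] = l(x,y)\,y - l(y,x)\,x$ for independent $x,y$, and the whole problem reduces to proving that $l(x,y)$ does not actually depend on $y$, i.e. that there is a single linear $l\colon\g\to\mathbb R$ with $l(x,y)=l(x)$.

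The key step is bilinearity/consistency. I would fix a linearly independent pair $x,y$ and a third vector $z$; the cases split according to how $z$ sits relative to $\rspan\{x,y\}$. If $z \notin \rspan\{x,y\}$ (possible only when $\dim\g\ge 3$), then $x,y,z$ are independent and I can read off the coefficient of $x$ in $[x,y]$, $[x,z]$, and in $[x,y+z]$; expanding $[x,y+z]=[x,y]+[x,z]$ and matching the (unique) coordinates shows $l(x,y)=l(x,z)=l(x,y+z)$, which already says $l(x,\cdot)$ is constant on a spanning set and additive, hence constant. When $\dim\g=2$ there is no such $z$, and here I would instead argue directly: pick any basis $\{x,y\}$, set $l(x):=$ the coefficient so that $[x,y]=l(x)y-l(y)x$ (two scalars, one equation — that is exactly the freedom to define $l$ on the basis), and then extend $l$ linearly; bilinearity of the bracket forces the formula to hold for all pairs, since both sides of \eqref{e7} are bilinear and alternating and agree on the basis pair $(x,y)$.

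Having established that $l(x,y)$ depends only on $x$, write $l(x)$ for this common value; then $[x,y]=l(x)y - l(y)x$ holds for all independent pairs, and trivially for dependent pairs (both sides vanish, using skew-symmetry and that on a dependent pair $[x,y]=0$ — which itself needs a short argument: if $y=cx$ then $[x,y]=c[x,x]=0$, while $l(x)y-l(y)x = l(x)cx - cl(x)x = 0$). It remains to check $l$ is linear. Additivity: compare the coefficient of $y$ on both sides of $[x_1+x_2,y]=[x_1,y]+[x_2,y]$ for $y$ chosen independent of $x_1,x_2,x_1+x_2$, which gives $l(x_1+x_2)=l(x_1)+l(x_2)$ when such a $y$ exists; in $\dim\g=2$ this is handled by the basis construction above, and in higher dimension such a $y$ always exists. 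Homogeneity $l(cx)=c\,l(x)$ follows the same way from $[cx,y]=c[x,y]$.

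The main obstacle I anticipate is the bookkeeping around uniqueness of coefficients: the decomposition $[x,y]=a\,x+b\,y$ pins down $a,b$ only when $x,y$ are independent, so every time I want to "compare coefficients" I must be sure I am working with an independent triple, and I need to check that enough such triples exist in all dimensions $\ge 2$ — the genuinely separate case being $\dim\g=2$, where the argument is different in flavour (define $l$ on a basis, extend, and invoke bilinearity) rather than a degenerate instance of the general one. A secondary subtlety is ruling out the bracket being identically zero versus genuinely of the form \eqref{e7}: the abelian Lie algebra does satisfy \eqref{e7} with $l\equiv 0$, so there is nothing to rule out, but I should phrase the argument so that it produces $l\equiv 0$ gracefully in that case rather than dividing by something that might be zero.
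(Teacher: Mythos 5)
Your argument is correct, and it is essentially the classical proof of this statement, which is Lemma~4.1 of Milnor \cite{9}; the paper itself states the proposition without giving any proof, so there is no in-paper argument to diverge from. One small wording slip to fix: in the $\dim\g\ge 3$ step it is the coefficients of $y$ and of $z$ in $[x,y+z]=[x,y]+[x,z]$ (taken with respect to the linearly independent set $\{x,y,z\}$) that give $l(x,y)=l(x,y+z)=l(x,z)$, whereas the coefficient of $x$ instead yields additivity of $l(\cdot,x)$ in its first slot; the conclusion you draw is nevertheless the right one, and the remaining cases ($\dim\g=2$ via the basis construction, dependent pairs, and the linearity of $l$) are handled correctly.
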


By choosing a definite positive metric, the shear curvatures are constant.
\begin{equation*}
K=-\|L\|^2 <0.
\end{equation*}
If  $(\varphi, \xi, \eta)$ is a left-invariant almost contact metric structure shadow on the non-unimodular Riemannian Lie group $(G, g)$, according to Proposition \ref{prop 3}, the vector field $\xi \in \g$  is a unit geodesic vector.

Now given the assumption of  $r \neq 0$ constants, there are p and q such that 
\begin{equation*}
\alpha=r+p, \quad
\delta=r-p, \quad
\beta=(r+p)q, \quad
\gamma=-(r-p)q.
\end{equation*}
Therefore, we will have the following possible modes:\\
\textbf{(A)} If  $p \neq \{ 0 , r, -r\}$, then the unit geodesic vector fields have only one of the following two states:
\begin{itemize}
	\item[A. 1)] $\pm e_1$  if  $\Delta := (\beta+\gamma)^2 - 4 \alpha \delta <0$;
	\item[A. 2)] $\pm e_1$  and $\cos \theta e_1 + \sin \theta e_3$  if $\Delta \geq 0$  where  $$\alpha \cos^2 \theta + (\beta+\gamma) \cos \theta \sin\theta + \delta \sin^2 \theta=0.$$
\end{itemize}

\noindent \textbf{(B)} If $p = r$, then the unit geodesic vector fields have only one of the following two states:
\begin{itemize}
	\item[B.1)]  $\pm e_1$ and $\pm e_2$  and $\pm \dfrac{1}{\sqrt{1+q^2}}(q e_2 - e_3)$  if  $q \neq 0$;
	\item[B.2)]  $\cos\theta e_1 + \sin\theta e_3$ if $q=0$.
\end{itemize}

\noindent\textbf{(C)}  If $p =-r$, then the unit geodesic vector fields have only one of the following two states:
\begin{itemize}
	\item[C.1)]  $\pm e_1$ and $\pm e_2$  and  $\pm \dfrac{1}{\sqrt{1+q^2}}(e_2 - qe_3)$ if  $q\neq0$;
	\item[C.2)]  $\cos\theta e_1 + \sin\theta e_2$ if $q=0$.
\end{itemize}

\noindent\textbf{(D)} If $p = 0$, then all unit vector fields in g are geodesic. (In fact $(G, g)$ have constant shear curvature).

\noindent\textbf{(E)} If condition \eqref{e7} is true, then all unit vector fields in $\g$ are geodesic.

Note that states \textbf{(B)} and \textbf{(C)} are isomorph. Simply replace $e_2$ with $e_2$ and $(\alpha, \beta)$ with $(\lambda, \delta)$.

Considering the above considerations, we classify states \textbf{(A)} to \textbf{(E)} at the isomorphism class as follows:

\noindent\textbf{(1)}	Suppose  $\xi=\pm e_1$.\\
Therefore, $\ker \eta= \xi^\perp = \rspan\{ e_1, e_2\}$ and since  $\{e_1, e_2\}$ is orthonormal, so it is an $\varphi$-basis, too. At the isomorphism class, we can assume $\xi=e_1$  and  $\varphi e_2=e_3$. Hence,  $(\varphi, \xi, \eta)$ at the isomorphism class is described as follows:
\begin{equation}\label{e8}
[\xi, e]=\alpha e + \varphi e \quad
[\xi, \varphi e] = \gamma e +  \delta \varphi e \quad
[e, \varphi e]=0,
\end{equation}
provided that $\alpha+\delta \neq 0$  and  $\alpha \gamma + \beta \delta =0$.\\

\noindent\textbf{(2)} Suppose  $\xi= \cos \theta e_2 + \sin \theta e_3$.\\
In this case, $\ker \eta = \xi^\perp =\{E_1 := e_1, \, E_2:=-\sin\theta e_2 + \cos \theta e_3 \}$  is an orthonormal basis and also an $\varphi$-basis. According to \eqref{e6}, we will have the following
\begin{align*}
& [\xi, E_1] = 
(\beta \cos^2\theta + (\delta-\alpha) \sin\theta\cos\theta -\gamma \sin^2\theta)E_2 
-
(\alpha \cos^2\theta + (\beta+\gamma) \sin\theta\cos\theta +\delta \sin^2\theta)\xi, \\
&
[\xi, E_2]=0, \\
&
[E_1, E_2]=
(\delta \cos^2\theta + (\beta-\gamma) \sin\theta\cos\theta +\alpha \sin^2\theta)E_2 
-
(\gamma \cos^2\theta + (\delta-\alpha) \sin\theta\cos\theta -\beta \sin^2\theta)\xi. 
\end{align*}
According to Lemma \ref{lem 4}, since  $E_1=\ker \eta$, then  $[\xi, E_1]\in \ker \eta$, that is
\begin{equation}\label{e9}
\alpha \cos^2\theta + (\beta+\gamma) \sin \theta \cos\theta +\delta \sin^2 \theta=0.
\end{equation}
This is precisely the condition of case \textbf{(A 2)}.

We assume
\begin{align*}
&  A:= \delta \cos^2 \theta - (\beta+\gamma) \sin\theta\cos\theta + \alpha \sin^2\theta, \\
& B:= -(\gamma \cos^2 \theta + (\delta-\alpha)\sin\theta\cos\theta -\beta \sin^2\theta),\\
& C:= \beta \cos^2\theta + (\delta - \alpha) \sin\theta\cos\theta - \gamma \sin^2 \theta.
\end{align*}
Given \eqref{e6} and \eqref{e9} we have
$$
A=A+0=\alpha+\delta \neq 0.
$$
As usual, at the isomorphism class, we assume  $e=E_1$ and  $\varphi e=E_2$. So  $(\varphi, \xi, \eta)$ is completely defined as follows:
\begin{equation}\label{e10}
[\xi, e]=C\varphi e \qquad
[\xi, \varphi e]=0, \qquad
[e, \varphi e]= A \varphi e + B \xi,
\end{equation}
provided that  $A\neq 0$. In this case, $\eta$ is contact form if and only if  $B \neq 0$.\\

\noindent\textbf{(3)} Suppose  $\xi = \pm \dfrac{1}{\sqrt{1+q^2}}(q e_2-e_3)$.\\
This happens when $p = r$ and  $q \neq 0$, that is,  $\alpha=2r\neq 0$,  $\beta=2rq \neq 0$,  $\gamma=\delta=0$.

An orthonormal basis for $ker~ \eta$ is as  $\{ e_1, \dfrac{1}{\sqrt{1+q^2}}(e_2+qe_3) \}$. So at the isomorphism class, we can assume
\begin{equation*}
\xi=\dfrac{1}{\sqrt{1+q^2}}(e_2- q e_3),~~~ e=e_1, ~~~
\varphi e = \dfrac{1}{\sqrt{1+q^2}}(e_2+qe_3).
\end{equation*}
Hence according to \eqref{e6} we have
\begin{equation}\label{e11}
[\xi, e]= -\beta \varphi e, \quad
[\xi, \varphi e]=0, \quad
[e, \varphi e]= \alpha \varphi e,
\end{equation}
if we put  $A:=\alpha$,  $B:=0$,  $C:=-\beta$, it becomes a special case of \eqref{e10}.\\

\noindent\textbf{(4)} Suppose  $\xi=\cos \theta e_1+ \sin \theta e_3$.\\
This occurs when $p = r$ and $q = 0$, that is,  $\alpha=2r\neq 0 =\beta= \gamma = \delta$. An orthonormal basis and also a $\varphi$-basis for $ker \eta$ is as  $\{e_2, -\sin \theta e_1+\cos \theta e_3\}$. At the isomorphism class, we can assume that  $e=e_2$, $\varphi e=-\sin \theta e_1+\cos \theta e_3$. Given \eqref{e6}, we have
$$
[\xi, e]=\alpha \cos \theta e,~~~ [\xi, \varphi e]=0,~~~ [e, \varphi e]=\alpha \sin \theta e.
$$
By taking 
\begin{equation*}
\begin{split}
& \bar{A}:= \alpha \cos \theta, \\
& \bar{B}:= \alpha \sin \theta,
\end{split}
\end{equation*}
the almost contact metric structure is completely defined as follows
\begin{equation*}
[\xi, e]=\bar{A}, \quad
[\xi, \varphi e]=0, \quad
[e, \varphi e]= \bar{B} e,
\end{equation*}
provided that  $\bar{A}^2+\bar{B}^2 \neq 0$. In this case, $\eta$ is never a contact form.\\

\noindent\textbf{(5)} Suppose $\xi$ is an arbitrary vector field and $p = 0$.\\
So we have  $\alpha=\delta=r\neq 0$ and  $\beta= -\gamma=rq$. There are real constants of $\theta$ and $\omega$ such that
\begin{equation*}
\xi  = \cos \theta e_1 + \sin \theta \cos \omega e_2 + \sin \theta \sin\omega e_3.
\end{equation*}
In addition, $ker~\eta=\{E_1, E_2 \}$  that 
\begin{equation*}
\begin{split}
&
E_1=-\sin\theta e_1 + \cos\theta \cos\omega e_2 +\cos\theta \sin \omega e_3, \\
&
E_2 = -\sin \omega +\cos \omega e_3.
\end{split}
\end{equation*}
So at the isomorphism class, we can assume that $e=E_1$  and  $\varphi e=E_1$. Given \eqref{e6}, we have
\begin{equation*}
[\xi, e]=\alpha \sin \theta \xi + \alpha \cos\theta e + \alpha q \varphi e.
\end{equation*}	
According to Lemma \ref{lem 4}, because $e \in ker~\eta$  then  $[\xi, e] \in ker~\eta$, and because  $\alpha \neq 0$, then  $\sin \theta=0$. So $\xi=e_1$  and  $\{e_2, e_3\}$ is a $\varphi$-orthonormal basis and also an $\varphi$-basis for $ker~\eta$. Then we have	
\begin{equation*}
[\xi, e]=\alpha,\quad
[\xi, \varphi e]=0, \quad
[e, \varphi e]=0,
\end{equation*}
provided  $\alpha \neq 0$. If  $\beta=\gamma=\delta$, then this is a special case of \eqref{e8}.\\

\noindent\textbf{(6)} Suppose $\xi$ is an arbitrary vector field and $\g$ is a specific case Lie algebra. So, there is a linear mapping like $l: \g \to \mathbb{R}$  such that  $$[x,y]=l(x)y -l(y)x,~~~\forall x,y\in \g.$$

Consider an arbitrary $\varphi$-basis like  $\{\xi, e, \varphi e \}$. Then we have:
\begin{align*}
&  [\xi, e]=l(x)e - l(e)\xi, \\
& [\xi, \varphi e] = l(\xi) \varphi e - l(\varphi e)\xi, \\
& [e, \varphi e]= l(e) \varphi e - l(\varphi e) e.
\end{align*}
According to Lemma \ref{lem 4}, because  $e, \varphi e \in ker~\eta$, then  $[\xi, e], [\xi, \varphi e] \in ker~\eta$, or  $l(e)=l(\varphi e)=0$. So, Lie algebra is completely represented by the real parameter  $\alpha:=l(\xi)$. $\alpha \neq 0$  because $\g$ is non-unimodular. Then we have
\begin{equation*}
[\xi, e]=\alpha e, \quad
[\xi, \varphi e]=\alpha \varphi e, \quad
[e, \varphi e]=0,
\end{equation*}
where  $\alpha \neq 0$. If  $\alpha=\delta$ and  $\beta=\gamma$, then this case is a special case of \eqref{e8}.

All the classifications we made are summarized as below:

\begin{theorem}\label{thm6}
	Suppose $\g$ is a three-dimensional non-unimodular Riemannian Lie algebra described by \eqref{e6} with respect to an appropriate orthonormal basis  $\{e_1, e_2, e_3\}$. Then at the isomorphism class, the following are the only possible modes for the left-invariant almost contact metric structures $\{\varphi, \xi, \eta\}$ where $\xi \in ker~\eta$\\
	(A)	If $\alpha$, $\beta$, $\gamma$ and $\delta$ satisfy \eqref{e6}
	\begin{equation}\label{e12}
	[\xi, e]=\alpha e + \beta \varphi e, \quad
	[\xi, \varphi e ]=\gamma e+\delta \varphi e, \quad
	[e, \varphi e]= 0.
	\end{equation}
	(B)	If $\alpha$, $\beta$, $\gamma$ and $\delta$ satisfy \eqref{e6} and \eqref{e9},
	\begin{equation}\label{e13}
	[\xi, e]=C \varphi e, \quad
	[\xi, \varphi e ]=0, \quad
	[e, \varphi e]= A \varphi e + B \xi.
	\end{equation}
	Given that  $A \neq 0$. \\
	\noindent(C)	If  $\alpha \neq 0=\beta=\gamma=\delta$
	\begin{equation*}
	[\xi, e]= \bar{A}^2 + \bar{B}^2,
	\end{equation*}
	given that $\bar{A}^2 + \bar{B}^2 \neq 0$. 
\end{theorem}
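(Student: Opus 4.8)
The plan is to reduce the classification to the determination, for each isomorphism type of the Lie algebra \eqref{e6}, of all admissible choices of the vector $\xi$, and then to pin down $\varphi$ on the complementary plane. First I would invoke Proposition \ref{prop 3}: since $(G,g)$ is homogeneous, the condition $\xi\in\ker d\eta$ is equivalent to $\xi$ being a geodesic vector, and as $\eta(\xi)=g(\xi,\xi)=1$ forces $|\xi|_g=1$, the vector $\xi$ must be a \emph{unit} geodesic vector of the left-invariant metric. Using the normalization $\alpha=r+p$, $\delta=r-p$, $\beta=(r+p)q$, $\gamma=-(r-p)q$ (valid once one sets $r=\tfrac12(\alpha+\delta)\neq 0$; the degenerate sub-case in which every bracket lies in $\rspan\{x,y\}$ is handled separately via Proposition \ref{ec5}), I would split according to whether $p\notin\{0,r,-r\}$ or $p\in\{0,r,-r\}$, obtaining the exhaustive list of unit geodesic vectors recorded in the displays \textbf{(A)}--\textbf{(E)} above.

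Next, for each candidate $\xi$ I would build an orthonormal $\varphi$-basis $\{\xi,e,\varphi e\}$ of $\g$. This is always possible: $\ker\eta=\xi^{\perp}$ is a $2$-plane on which $J:=\varphi|_{\ker\eta}$ is a $g$-compatible complex structure, so picking any unit $e\in\ker\eta$ gives $\varphi e=\pm Je$, and replacing $e$ by $-e$ (or composing with the obvious orientation-reversing contact isometry) lets me fix the sign; hence there is no loss of generality in prescribing $\varphi e$ once $\xi$ and $e$ are chosen. Lemma \ref{lem 4} is then the workhorse: since $e,\varphi e\in\ker\eta$, both $[\xi,e]$ and $[\xi,\varphi e]$ must again lie in $\ker\eta=\rspan\{e,\varphi e\}$. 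Feeding the basis vectors through \eqref{e6} and imposing this containment yields, in each case, both the algebraic constraint that is needed (for instance \eqref{e9} in case \textbf{(2)}) and the explicit bracket relations; I would then read off the normal forms \eqref{e12}, \eqref{e13}, and the third family, retaining the non-degeneracy conditions $\alpha+\delta\neq 0$, $A\neq 0$, $\bar A^2+\bar B^2\neq 0$ that survive the reduction.

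Finally I would consolidate. Cases \textbf{(3)}, \textbf{(5)} and \textbf{(6)} each collapse to a specialization of the family \eqref{e12} (as already noted, by setting $A=\alpha$, $B=0$, $C=-\beta$, and similarly in the other two, after matching the resulting constants against the coefficients of \eqref{e6}); case \textbf{(2)} gives \eqref{e13}; and case \textbf{(4)} gives the third family. The sub-cases labelled \textbf{(B)} and \textbf{(C)} in the geodesic-vector enumeration are isomorphic via the flip $e_2\leftrightarrow e_3$, $(\alpha,\beta)\leftrightarrow(\delta,\gamma)$, so they contribute nothing new. I expect the main obstacle to be bookkeeping rather than depth: one must be certain that the list of unit geodesic vectors in \textbf{(A)}--\textbf{(E)} is genuinely complete (this rests on the geodesic-vector/shear-curvature computations for non-unimodular three-dimensional metrics) and that every isomorphism invoked to put $(\varphi,\xi,\eta)$ in normal form preserves all four contact-isomorphism conditions, not merely the metric. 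Once those two points are secured, the theorem is exactly the union of cases \textbf{(1)}--\textbf{(6)}.
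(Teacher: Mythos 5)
Your proposal follows the paper's own argument essentially verbatim: reduce to the determination of unit geodesic vectors via Proposition \ref{prop 3}, enumerate them through the $(r,p,q)$ parametrization and the special case of Proposition \ref{ec5}, build an orthonormal $\varphi$-basis for each candidate $\xi$, and use Lemma \ref{lem 4} to force the bracket normal forms in each of the six cases before consolidating. The only slip is in the consolidation step: the substitution $A=\alpha$, $B=0$, $C=-\beta$ makes case (3) a specialization of the family \eqref{e13} (the paper's \eqref{e10}), not of \eqref{e12}; it is cases (5) and (6) that reduce to \eqref{e12}.
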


According to case (A) in Theorem \ref{thm6}, we obtain following corollary.
\begin{corollary}\label{cor7}
	Every three-dimensional non-unimodular Riemannian Lie algebra adopts a left-invariant almost contact metric structure of  $(\varphi, \xi, \eta, g)$ with condition  $\xi \in ker~d\eta$.
\end{corollary}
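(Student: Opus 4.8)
The plan is to produce the structure explicitly from the orthonormal basis already in hand, so that the corollary becomes a direct specialization of case (A) of Theorem \ref{thm6}. Let $\{e_1,e_2,e_3\}$ be the orthonormal basis of $\g$ for which the brackets take the form \eqref{e6}, and set
\[
\xi := e_1, \qquad \eta := g(\xi,\cdot), \qquad \varphi e_1 := 0,\quad \varphi e_2 := e_3,\quad \varphi e_3 := -e_2,
\]
extended linearly to $\g$. First I would check that $(\varphi,\xi,\eta,g)$ satisfies \eqref{e5}: $\eta(\xi)=g(e_1,e_1)=1$; the identity $\varphi^2=-\mathrm{Id}+\eta\otimes\xi$ is verified on each basis vector (on $e_1$ both sides vanish, and on $e_2,e_3$ both sides equal $-e_2$ and $-e_3$ respectively); and the compatibility $g(\varphi X,\varphi Y)=g(X,Y)-\eta(X)\eta(Y)$ follows because $\varphi$ restricted to $\rspan\{e_2,e_3\}$ is a rotation by $\pi/2$ (hence orthogonal) while $\eta$ is dual to $e_1$. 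Thus $(\varphi,\xi,\eta,g)$ is a left-invariant almost contact metric structure on $G$.

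Next I would verify the condition $\xi\in\ker d\eta$. Since $\eta$ and the $e_i$ are left-invariant, $d\eta(X,Y)=-\eta([X,Y])$ for all left-invariant $X,Y$. By \eqref{e6} every bracket $[e_i,e_j]$ lies in $\rspan\{e_2,e_3\}=\ker\eta$, so $d\eta$ vanishes identically; in particular $\xi\in\ker d\eta$ (equivalently, by Lemma \ref{lem 1}, $\cL_\xi\eta=0$). Writing $e:=e_2$ and $\varphi e=e_3$, the relations \eqref{e6} become exactly the brackets \eqref{e12} of case (A) of Theorem \ref{thm6}, with the same $\alpha,\beta,\gamma,\delta$; since $\alpha+\delta=0$ does not obstruct the choice $\xi=e_1$, the construction is valid for every three-dimensional non-unimodular Riemannian Lie algebra.

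There is essentially no obstacle here: the only points requiring care are that the prescribed $\varphi$ is genuinely compatible with $g$, and that the defining relations \eqref{e6} impose no further restriction preventing the choice $\xi=e_1$. Both are immediate from the computations above, and the substantive work has already been carried out in establishing Theorem \ref{thm6}, of which this corollary is simply the existence statement extracted from part (A).
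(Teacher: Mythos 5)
Your construction is correct and is exactly the content of case (A) of Theorem \ref{thm6} (the paper's case $\xi=\pm e_1$), which the paper itself cites as the proof; you merely make explicit the choice $\varphi e_2=e_3$, $\varphi e_3=-e_2$ and the observation that all brackets in \eqref{e6} land in $\rspan\{e_2,e_3\}=\ker\eta$, so that $d\eta\equiv 0$ and the condition $\xi\in\ker d\eta$ holds trivially. The only slip is cosmetic: the relevant constraint from \eqref{e6} is $\alpha+\delta\neq 0$ (non-unimodularity), not $\alpha+\delta=0$, but either way it places no restriction on taking $\xi=e_1$, so the argument stands.
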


According to the classification of Theorem \ref{thm6}, we will get the following result:
\begin{corollary}\label{cor8}
	Suppose  $(\varphi, \xi, \eta, g)$ is a left-invariant almost contact metric structure on a three-dimensional non-unimodular Lie algebra g described as \eqref{e6} with an appropriate orthonormal basis  $\{e_1, e_2, e_3\}$. Then $\eta$ is a contact form if and only if \eqref{e9} exists and if  $B \neq 0$, then $(\varphi, \xi, \eta, g)$  is isomorph with state (b) of the theorm.
\end{corollary}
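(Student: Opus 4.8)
The statement to prove is Corollary \ref{cor8}, which characterizes when $\eta$ is a contact form for a left-invariant almost contact metric structure on a three-dimensional non-unimodular Lie algebra, and identifies the isomorphism type.

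\textbf{Proof plan.} The plan is to read off the conclusion directly from the case analysis already carried out in preparation for Theorem \ref{thm6}, organizing it into the two directions of the ``if and only if.'' First I would recall that $\eta$ is a contact form precisely when $d\eta$ is nowhere degenerate on $\ker\eta$, equivalently $d\eta(e,\varphi e)\neq 0$ for the $\varphi$-basis $\{\xi,e,\varphi e\}$; using the formula $d\eta(X,Y)=-\eta([X,Y])$ (valid since $\eta$ is left-invariant, hence $X\eta(Y)=Y\eta(X)=0$ on invariant fields), this reduces to computing the $\xi$-component of $[e,\varphi e]$.

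\textbf{Forward direction.} Assume $\eta$ is a contact form. In case (A), i.e. structure \eqref{e12}, we have $[e,\varphi e]=0$, so $d\eta(e,\varphi e)=-\eta([e,\varphi e])=0$, contradicting non-degeneracy; hence case (A) is excluded. In case (C) (structure of state (c), with $\alpha\neq 0=\beta=\gamma=\delta$) the bracket $[e,\varphi e]$ has the form $\bar B\, e$ with no $\xi$-component, so again $d\eta(e,\varphi e)=0$ and $\eta$ is not contact; this case is excluded too. Therefore the structure must be of type (B), i.e. \eqref{e13}, which is exactly the case where \eqref{e9} holds. Here $[e,\varphi e]=A\varphi e+B\xi$, so $d\eta(e,\varphi e)=-B$, and $\eta$ contact forces $B\neq 0$. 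Thus \eqref{e9} holds and $B\neq 0$, and the structure is (contact) isomorphic to state (b) of Theorem \ref{thm6}, as claimed.

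\textbf{Converse direction.} Conversely, assume \eqref{e9} holds; then by the case analysis the structure is of type \eqref{e13} with $A\neq 0$, and $d\eta(e,\varphi e)=-B$. If moreover $B\neq 0$, then $d\eta$ restricted to $\ker\eta=\rspan\{e,\varphi e\}$ is non-degenerate, and since $\xi\in\ker d\eta$ (the standing hypothesis $\xi\in\ker d\eta$, equivalently $\cL_\xi\eta=0$ by Lemma \ref{lem 1}), we get that $\eta\wedge d\eta$ is a volume form, i.e. $\eta$ is a contact form; moreover $\phi(e,\varphi e)=g(e,\varphi(\varphi e))=g(e,-e+\eta(\varphi e)\xi)=-1=d\eta(e,\varphi e)$ after the normalization $B=1$ built into state (b), confirming \eqref{e4} and that $(\varphi,\xi,\eta,g)$ is (contact) isomorphic to state (b).

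\textbf{Main obstacle.} The only delicate point is bookkeeping: one must make sure the normalization used in passing to state (b) of Theorem \ref{thm6} is compatible with the contact condition \eqref{e4}, i.e. that after rescaling so that the $\xi$-coefficient of $[e,\varphi e]$ equals the one dictated by $\phi(e,\varphi e)$ (which is $-1$ in the chosen orientation of the $\varphi$-basis), the constraints $A\neq 0$ and \eqref{e9} are preserved; this is a routine scaling argument, but it is where the statement ``$\eta$ is a contact form'' (as opposed to merely $d\eta|_{\ker\eta}$ non-degenerate) actually gets used. Everything else is a direct substitution into the bracket relations \eqref{e12}, \eqref{e13} already established.
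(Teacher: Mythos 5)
Your proposal is correct and follows essentially the same route as the paper, which offers no explicit proof but simply reads the corollary off the case analysis (A)--(C): computing $d\eta(e,\varphi e)=-\eta([e,\varphi e])$ in each case shows only type \eqref{e13} with $B\neq 0$ yields $\eta\wedge d\eta\neq 0$, exactly as you argue. The only wobble is your closing remark about a ``normalization $B=1$ built into state (b)''---no such normalization appears in Theorem \ref{thm6}, and the paper's usage (``$\eta$ is contact form if and only if $B\neq 0$'') makes clear that ``contact form'' here means $\eta\wedge d\eta\neq 0$ rather than the exact identity $d\eta=\phi$ of \eqref{e4}, so that step is unnecessary and should be dropped.
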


\section{Conclusion}
Every three dimensional non-unimodular Riemannian Lie algebra adopts a left--invariant almost contact metric structure $(\varphi, \xi, \eta, g)$   with the condition of  $\xi \in ker~d\eta$.
Suppose $(\varphi, \xi, \eta, g)$  is a left--invariant almost contact metric structure on a three-dimensional non-unimodular Lie algebra $g$ described as \eqref{e6} with an appropriate orthonormal basis  $\{e_1, e_2, e_3 \}$. Then $\eta$ is a contact form if and only if \eqref{e9} exists and if $B \neq 0$  then $(\varphi, \xi, \eta, g)$  is isomorph with state (b) in the theorem.

\end{document}